\documentclass[11pt]{article}

\usepackage[T1]{fontenc}
\usepackage{lmodern}
\usepackage{microtype}
\usepackage[a4paper,margin=27mm]{geometry}
\usepackage{amsmath,amssymb,amsthm,mathtools}
\usepackage{xcolor}
\usepackage[colorlinks=true,linkcolor=blue!55!black,
  citecolor=blue!55!black,urlcolor=blue!55!black]{hyperref}

\newtheorem{theorem}{Theorem}[section]
\newtheorem{lemma}[theorem]{Lemma}
\newtheorem{problem}[theorem]{Problem}
\newtheorem{proposition}[theorem]{Proposition}
\newtheorem{corollary}[theorem]{Corollary}
\theoremstyle{definition}

\newcommand{\cc}{\operatorname{cc}}
\newcommand{\cpn}{\operatorname{cp}}

\newcommand{\calQ}{\mathcal Q}
\newcommand{\join}{\vee}

\title{On the difference between clique partition
and clique covering numbers}
\author{Bo Ning\thanks{College of Computer Science, Nankai University, Tianjin 300350, P.R. China. E-mail: \texttt{bo.ning@nankai.edu.cn}. Partially supported by the National Natural Science Foundation of China (Nos. 12371350 and 12426675) and the Fundamental Research Funds for the Central Universities, Nankai University (No. 63243151).}}
\date{30 July 2026}

\begin{document}
\maketitle

\begin{abstract}
For a graph \(G\), let \(\operatorname{cp}(G)\) and
\(\operatorname{cc}(G)\) be the minimum numbers of cliques in an edge
partition and a clique cover of \(G\), respectively. Set
$\sigma_n
 = \max_{\lvert V(G)\rvert=n}
   \bigl(\operatorname{cp}(G)-\operatorname{cc}(G)\bigr),
d_n=\left\lfloor\frac{n^2}{4}\right\rfloor-\sigma_n.$
In 1983, Erd\H{o}s, Faudree, and Ordman asked whether \(d_n=O(n)\). Caccetta,
Erd\H{o}s, Ordman, and Pullman previously constructed graphs showing
\(d_n=O(n^{3/2})\). We prove that
$d_n=\Theta(n^{4/3}),$
thereby determining the correct order of the deficit and answering
their question in the negative.
\end{abstract}

\section{Introduction}
All graphs are
finite, simple, and undirected.
A \emph{clique covering} of a graph $G$ is a family of cliques
whose union contains $E(G)$.  If every edge lies in exactly one member
of the family, then the family is a \emph{clique partition}.  We denote the
minimum possible sizes of a clique
covering and a clique partition by $\cc(G)$ and $\cpn(G)$.
These notions originate from set representations of graphs; see Erd\H{o}s, Goodman, and P\'osa \cite{EGP1966}. A clique covering gives each vertex the set of covering cliques containing it, with adjacency equivalent to intersecting sets; the correspondence is bijective. Requiring any two representing sets to meet in at most one element turns every edge into a unique clique, yielding a clique partition. In incidence-geometric language, such representations are partial linear spaces, and for complete graphs, finite linear spaces (see de Bruijn–Erd\H{o}s \cite{deBruijnErdos1948}, 1948).

Erd\H{o}s, Goodman, and P\'osa \cite{EGP1966}  proved that every
$n$-vertex graph $G$ satisfies
\begin{equation}\label{eq:egp}
  \cpn(G)\leq\left\lfloor\frac{n^2}{4}\right\rfloor.
\end{equation}
The balanced complete bipartite graph shows that
\eqref{eq:egp} is best possible.  For this extremal graph, however,
every clique has order two, and hence $\cc(G)=\cpn(G)$.
Every clique partition is
a clique cover, so
$\cc(G)\leq \cpn(G)$.
Orlin \cite{Orlin1977} proved that determining $\cc(G)$ for a graph is NP-complete.

Define $$\sigma(G):=\cpn(G)-\cc(G)$$ and 
$$\sigma_n:=\max_{|V(G)|=n}\bigl(\cpn(G)-\cc(G)\bigr).$$
Caccetta, Erd\H{o}s, Ordman, and
Pullman \cite{CEOP1985} called $\sigma(G)$ the spread of $G$.
They proved that $\frac{n^2}{4}-\frac{n^{3/2}}{2}+\frac n4\leq \sigma_n\leq \lfloor n^2/4\rfloor-2$, and
constructed a sequence of $n$-vertex graphs $G_n$ satisfying
\begin{equation}\label{eq:old-additive}
 \sigma(G_n)
 =\frac{n^2}{4}-\frac{n^{3/2}}{2}+\frac n4+O(1).
\end{equation}
 Consequently
$\sigma_n\sim n^2/4$, while their construction gives
$d_n=O(n^{3/2})$.  What remained open was the order of the deficit $d_n=\left\lfloor\frac{n^2}{4}\right\rfloor-\sigma_n.$

According to Caccetta, Erd\H{o}s, Ordman, and Pullman~\cite{CEOP1985}, Erd\H{o}s asked in
1982 for the maximum possible spread. The later \(O(n)\)-deficit
question was recorded by Erd\H{o}s Problems page
\cite{ErdosProblems632}, and was contributed to
Erd\H{o}s, Faudree, and Ordman in 1983.

\begin{problem}[Erd\H{o}s--Faudree--Ordman]\label{prob:erdos}
Is there a sequence of graphs $G_n$, with $|V(G_n)|=n$, such that
\[
 \cpn(G_n)-\cc(G_n)=\frac{n^2}{4}+O(n)?
\]
\end{problem}

The above problem also
appears as Problem 66 in Chung \cite{Chung1997} and as Question 6.32 in Faudree, Rousseau, and Schelp
\cite[Questions~6.31--6.32]{FRS1997}, adjacent to the corresponding ratio problem about the largest ratio
$\cpn(G)/\cc(G)$.

Our main result determines the correct
order of $\frac{n^2}{4}-\sigma_n$. 

\begin{theorem}\label{thm:main}
There exist absolute constants $C_1,C_2>0$ such that
\[
 \left\lfloor\frac{n^2}{4}\right\rfloor-C_1n^{4/3}
 \leq \sigma_n\leq
 \left\lfloor\frac{n^2}{4}\right\rfloor-C_2n^{4/3}
\]
for every sufficiently large integer $n$.
\end{theorem}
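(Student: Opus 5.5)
Two directions: a construction showing $\sigma_n\ge n^2/4-Cn^{4/3}$, and a structural argument showing every $n$-vertex graph has $\cpn(G)-\cc(G)\le n^2/4-cn^{4/3}$.

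\emph{Lower bound (construction).} Take a vertex set $A\cup B$ with $|A|=a$, $|B|=n-a$, where $a$ is close to $n/2$. Make $A$ a complete graph and join $A$ to $B$ completely, with $B$ independent; more precisely, take $G=K_a\join \overline{K_{n-a}}$, or a variant in which $A$ is replaced by a suitable dense structure. Then $\cc(G)\le n-a$, since the cliques $A\cup\{b\}$ cover all edges. In a clique partition, every $A$--$B$ edge lies in a clique consisting of one vertex $b$ together with a clique of $A$. For a fixed $b$, these cliques partition the neighbourhood $A$ of $b$ into vertex-disjoint cliques $Q_{b,1},\dots$, and the edges inside those cliques are used up. Distinct $b$'s must use edge-disjoint families. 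So $\cpn(G)$ is essentially $\sum_b(\text{number of parts for } b)$ plus a partition of the leftover edges of $K_a$. To keep $\cpn$ near $a(n-a)$, only $o$ of the $A$--$B$ edges should be grouped, so most parts are singletons. The Caccetta--Erd\H{o}s--Ordman--Pullman graphs give deficit $n^{3/2}$. To reach $n^{4/3}$, I would instead put a sparse structure on $A$: choose the edges inside $A$ so that every clique of $G[A]$ is small and edge-disjoint grouping is impossible. For example, let $G[A]$ be a graph with about $n^{4/3}$ edges and no short cycles / triangle-free with many disjoint matchings, and join $B$ completely to $A$. Then $\cc$ counts the covering of $A$--$B$ edges by triangles $\{b,x,y\}$, so we need $|B|\cdot(\text{matching size})$ to be small. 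I would balance the parameters: $\cc$ about $n^2/4-$ (savings from triangles), while $\cpn\ge$ the number of $A$--$B$ edges minus the number of edges of $G[A]$ (each edge of $A$ can absorb at most one triangle). The target is to optimize so that the total deficit $n^2/4-\cpn+\cc$ is $\Theta(n^{4/3})$. I expect the optimum to come from choosing $|E(G[A])|\approx n^{4/3}$ with $G[A]$ a union of structures covered by $\approx n^{2/3}$-size groups. I will fix the parameters after computing the exact $\cc$ via a design-like cover, using a projective plane or Steiner system on $A$.

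\emph{Upper bound.} Let $G$ be arbitrary. Use the Erd\H{o}s--Goodman--P\'osa argument \eqref{eq:egp} in a refined form. Greedily remove a maximal clique structure, then show $\cpn(G)\le e(G)-t(G)$-type bounds, where triangles save edges. Also bound $\cc(G)$ from below. If $\cc(G)=k$ is small, then $G$ is a union of $k$ cliques, and $\cpn(G)$ can be bounded by partitioning each clique's new edges: a clique of size $s$ meeting earlier cliques contributes at most $\lfloor s^2/4\rfloor$-ish pieces. Write $G=\bigcup_{i\le k}Q_i$ and set $q_i=|Q_i|$. The key inequality to aim for is $\cpn(G)\le \sum_i f(\text{overlap structure})$ with $\cpn-\cc\le n^2/4-cn^{4/3}$. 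Heuristically the deficit arises because saving a clique in $\cc$ requires cliques of size $\ge3$. These force either many edges inside the ``small side'', costing via $n^2/4-e$, or many cliques, costing $\cc$. Balancing the cost $\cc\ge x$ against the lost bipartite capacity $\approx n^2/x^2$ per level gives $x\approx n^{2/3}$ and deficit $n\cdot n^{1/3}$.

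The main obstacle is this upper bound. It is a stability-type argument: show $\sigma(G)\ge n^2/4-\epsilon n^{4/3}$ forces $G$ to be near-bipartite-complete with few internal edges, then run the balancing argument above rigorously.
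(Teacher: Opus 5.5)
Your proposal has a genuine gap in each direction.

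\textbf{Lower bound.} No graph of the shape you propose ($B$ independent and joined completely to $A$) can beat the old $n^{3/2}$ deficit, whatever you put on $A$.

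First, cliques through distinct vertices of $B$ are distinct, and those through a fixed $b$ must cover $A$. Hence $\cc(G)\ge|B|\,\alpha(G[A])$.

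Second, each internal edge lowers $\cpn$ by one. Vizing-colour $G[A]$, give each colour class its own vertex $b\in B$, and absorb each edge $xy$ of that class into the triangle $\{x,y,b\}$. This gives $\cpn(G)\le|A||B|-e(G[A])$, up to a loss of at most $\frac{|A|-|B|}{|A|}e(G[A])$ when $|A|>|B|$.

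Caro--Wei forces $e(G[A])\gtrsim|A|^2/\alpha(G[A])$. So for $|A|,|B|\approx n/2$ the deficit $\lfloor n^2/4\rfloor-\sigma(G)$ is $\gtrsim n^2/\alpha+n\alpha\gtrsim n^{3/2}$.

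Your sparse variant is worse still. Taking $e(G[A])\approx n^{4/3}$ forces $\alpha(G[A])\gtrsim n^{2/3}$, so $\cc(G)\gtrsim n^{5/3}$. A triangle-free $G[A]$ gives $\cc(G)\ge|A||B|/2$. Small $\cc$ needs $G[A]$ to be covered by few large cliques, not by matchings.

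The missing idea is to put clique structure on \emph{both} sides, so that $\cc$ is governed by the product $\alpha(G[A])\alpha(G[B])$ rather than by $|B|\alpha(G[A])$. The paper takes $G_{h,k}=(hK_k)\join(hK_k)$ with $k\asymp n^{1/3}$ and $h\asymp n^{2/3}$.
\begin{itemize}
\item The $h^2$ cliques $A_i\cup B_j$ cover the graph, and a transversal $K_{h,h}$ shows no fewer suffice, so $\cc=h^2$.
\item Lemma~\ref{lem:efo} gives $\cpn\ge h^2k^2-3h\binom k2$. This is attained by pairing same-labelled one-factor edges of $A_i$ and $B_{i+r}$ into edge-disjoint $K_4$'s.
\end{itemize}
Both losses, $h^2$ and $hk^2$, are then $O(n^{4/3})$.

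\textbf{Upper bound.} Your argument here is only a heuristic, and taken literally it does not give the right exponent: equating ``$\cc\ge x$'' with ``$n^2/x^2$'' gives losses of order $n^{2/3}$. The correct balance is $x^2$ against $n^2/x$, where $x$ is the independence number of each side. The two inequalities behind that balance are exactly what you have not supplied.

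The paper works on $2n$ vertices, with $\delta=n^2-\cpn(G)$ and $D=\delta+\cc(G)$.
\begin{itemize}
\item Take a minimum-weight clique partition. By the Gy\H{o}ri--Kostochka/Chung/Kahn bound $w\le2n^2$, it has at most $2\delta$ blocks of order at least three. So its two-vertex blocks form a triangle-free subgraph with at least $n^2-3\delta$ edges.
\item Mantel stability then yields a cut $A\dot\cup B$ with $\bigl||A|-n\bigr|\le\sqrt{6\delta}$ and at most $6\delta$ missing crossing edges. Restricting the partition shows each side can be partitioned into at most $5\delta$ cliques.
\item (i) A clique meets independent sets $I\subseteq A$ and $J\subseteq B$ at most once each. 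Hence $\alpha(G[A])\alpha(G[B])\le\cc(G)+6\delta\le7D$, and Caro--Wei turns this into $e(G[A])+e(G[B])\gtrsim n^2/\sqrt D$.
\item (ii) The Vizing-plus-triangles repacking above, applied to the denser side, produces a partition with fewer than $n^2-\delta$ cliques unless the internal edges number $O(\delta)$.
\end{itemize}
Together (i) and (ii) give $D^{3/2}\gtrsim n^2$. Your greedy refinement of Erd\H{o}s--Goodman--P\'osa and the ``$\lfloor s^2/4\rfloor$-ish'' count supply neither (i), nor (ii), nor the stability step that produces the cut.
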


Equivalently, Theorem~\ref{thm:main} states that
$d_n=\Theta(n^{4/3})$.  It therefore answers
Problem~\ref{prob:erdos} in the negative and identifies the
second-order term left open by \eqref{eq:old-additive}. 

Erd\H{o}s, Faudree, and Ordman
\cite[Lemma~4]{EFO1988} proved
a graph $G$ with $s$ crossing edges, $a$
edges inside one side, and $b$ edges inside the other side satisfies that
\[
  \cpn(G)\geq s-a-b-\min\{a,b\}.
\]
Orlin \cite{Orlin1977} also studied $cc(G)$ when $G$ is a line graph. Wallis \cite{Wallis1982} showed that if $G$ has $o(\sqrt{n})$ vertices  then $\cpn(K_n-G)$ is asymptomatically equal to $n$. Related problems were studied by Erd\H{o}s, Faudree, and Ordman
\cite{EFO1988}, who proved that, if
$m=cn^a$ with $1/2<a<1$, then
$\cpn(K_n-K_m)\sim c^2n^{2a}=m^2$; they also constructed graphs satisfying
$\cpn(G)/\cc(G)>n^2/64$ and showed that the clique covering number of
$K_n$ with a matching removed lies between $\log n-1$ and
$2\log n$. For some related work and variants, see Erd\H{o}s,
Ordman, and Zalcstein \cite{EOZ1987,EOZ1993}. For a survey on graph covering and partitioning problems, see Schwartz \cite{Schwartz2022}. 

\medskip

\noindent
{\bf Overview.}
For the construction, we determine both clique parameters of
\(G_{h,k}=(hK_k)\vee(hK_k)\); choosing \(k\asymp n^{1/3}\) and
\(h\asymp n^{2/3}\) gives \(d_n=O(n^{4/3})\). Conversely, a
minimum-weight clique partition produces a dense triangle-free spanning
subgraph and a nearly balanced cut. The covering number gives a lower
bound on the number of internal edges, while triangle repacking gives
an upper bound; comparison yields \(d_n=\Omega(n^{4/3})\).

\medskip

\noindent
{\bf Landau's notation.} We use Landau's notation. Throughout the paper, all asymptotic notation is taken as
$n\to\infty$.  If $f_1=f_1(n)$ and $f_2=f_2(n)>0$, then
$f_1=O(f_2)\quad\Longleftrightarrow\quad
 |f_1(n)|\le Cf_2(n)$
for some constant $C>0$ and all sufficiently large $n$, while
$f_1=\Omega(f_2)\quad\Longleftrightarrow\quad
 f_1(n)\ge cf_2(n)$
for some constant $c>0$ and all sufficiently large $n$.
We write $f_1\lesssim f_2$ and $f_1\gtrsim f_2$ as synonyms for
$f_1=O(f_2)$ and $f_1=\Omega(f_2)$, respectively.  Finally,
$f_1\asymp f_2$ (that is, $f_1=\Theta(f_2)$)
means that there exist constants $c,C>0$ such that
$cf_2(n)\le f_1(n)\le Cf_2(n)$ for all sufficiently large $n$.
Unless explicitly indicated otherwise, all constants implicit in this
notation are absolute.

\medskip

\noindent
{\bf Organization.}
In Section~\ref{sec:preliminaries}, we introduce notation and
state the results used later.  In Section~\ref{sec:construction}, we give the lower bound construction and compute its two clique parameters.
In Section~\ref{sec:upper}, we  prove the upper bound and complete the proof
of Theorem~\ref{thm:main}.

\section{Preliminaries}\label{sec:preliminaries}
A clique in a
clique cover or clique partition is allowed to have order two.  For a graph
$G$, let $e(G)=|E(G)|$, $\alpha(G)$ denote its independence
number, and $N_G(v)$ be the open neighborhood of $v$.
For a clique partition $\calQ$ of $G$, define
$w(\calQ)=\sum_{Q\in\calQ}|Q|.$

We use four results. The following result was conjectured by Katona and Tarj\'an and
proved independently by Gy\H{o}ri and Kostochka \cite{GK1979},
Chung \cite{Chung1981}, and Kahn \cite{Kahn1981}. 
\begin{theorem}[Gy\H{o}ri--Kostochka \cite{GK1979}, Chung
  \cite{Chung1981}, and Kahn \cite{Kahn1981}]
\label{thm:weighted}
Every graph $G$ on $n$ vertices has a clique partition $\calQ$
satisfying
\[
  w(\calQ)\leq2\left\lfloor\frac{n^2}{4}\right\rfloor.
\]
\end{theorem}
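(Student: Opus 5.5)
The plan is to argue by induction on $n$, deleting one vertex at a time. Since
\[
 2\left\lfloor\frac{n^2}{4}\right\rfloor-2\left\lfloor\frac{(n-1)^2}{4}\right\rfloor
 =2\left\lfloor\frac n2\right\rfloor ,
\]
it suffices to find a vertex $v$ and cliques $K_1,\dots,K_t$ with the following properties: the $K_i$ are vertex-disjoint, each lies inside $N_G(v)$, they partition $N_G(v)$ as a vertex set, and
\[
 \sum_{i}(|K_i|+1)\le 2\lfloor n/2\rfloor .
\]
The cliques $K_i\cup\{v\}$ then partition all edges at $v$, together with the edges inside the $K_i$. Let $G'$ be the graph obtained from $G-v$ by deleting the edges inside the $K_i$. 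It has $n-1$ vertices, so the induction hypothesis gives it a clique partition of weight at most $2\lfloor (n-1)^2/4\rfloor$. Adding the cliques $K_i\cup\{v\}$ gives a clique partition of $G$, and the identity above shows its weight is at most $2\lfloor n^2/4\rfloor$. The left-hand side of the required inequality equals $d+t$, where $d=|N_G(v)|$. So the whole task is to choose $v$ such that $N_G(v)$ can be covered by few disjoint cliques.

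I take $v$ of minimum degree $d=\delta(G)$. The case $d\le \lfloor n/2\rfloor$ is easy: use singletons $K_i=\{u\}$, which amounts to covering the star at $v$ by edges, at cost $2d\le 2\lfloor n/2\rfloor$.

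Now suppose $d>n/2$. Let $H=G[N_G(v)]$. Each $u\in N_G(v)$ has at least $\delta=d$ neighbours in $G$, one of which is $v$. Hence $u$ has at most $n-1-d$ non-neighbours among the other vertices of $N_G(v)$. So the complement $\overline H$ has maximum degree $\Delta\le n-1-d$. Partitioning $N_G(v)$ into cliques of $H$ is the same as properly colouring $\overline H$. The greedy bound gives $t=\chi(\overline H)\le n-d$, and hence cost $d+t\le n$. This finishes the proof when $n$ is even.

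The main obstacle is gaining one more unit when $n$ is odd, where we need $d+t\le n-1$. For this I would apply Brooks' theorem to $\overline H$, which gives $\chi(\overline H)\le \Delta\le n-1-d$ unless some component of $\overline H$ is a complete graph $K_{\Delta+1}$ or an odd cycle with $\Delta=2$. I would also treat $\Delta\le1$ directly: then $\chi(\overline H)\le 2$, and a short check of $d+2\le n-1$ is needed.

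The exceptional configurations need separate handling. A $K_{\Delta+1}$ component of $\overline H$ is a set of $n-d$ vertices of $N_G(v)$ that is independent in $G$. Since $d>n/2$, this forces the degrees of those vertices to be nearly tight. In that situation I would either re-choose $v$ among such vertices or do a small local modification, for example pulling one independent vertex into a single edge with $v$ while merging other colour classes. The odd-cycle case has $n-1-d\le 2$, so $d\ge n-3$, and it can be checked by hand. These boundary cases are where I expect the real care to lie. The bulk of the argument is the minimum-degree vertex together with the colouring of the complement of its neighbourhood.
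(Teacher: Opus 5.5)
The paper gives no proof of this theorem. It quotes it from Gy\H{o}ri--Kostochka, Chung and Kahn and uses it only as a black box in Lemma~\ref{lem:skeleton}, so your argument has to stand on its own. Your reduction, the identity $2\lfloor n^2/4\rfloor-2\lfloor (n-1)^2/4\rfloor=2\lfloor n/2\rfloor$, and the inductive step for even $n$ are correct. The odd step, however, has a genuine gap: for odd $n$ the vertex you need does not exist in general, so this is more than a few boundary cases.

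First, no local modification at $v$ can help. In \emph{every} clique partition of $G$, the members containing $v$ have the form $\{v\}\cup K_i$, where the $K_i$ partition $N_G(v)$ into cliques. Their total weight is therefore at least $d(v)+\theta(G[N_G(v)])$, where $\theta$ is the least number of cliques partitioning the vertex set. The following graphs show this quantity can exceed $n-1$ at every vertex:
\begin{itemize}
\item For $G=K_n$ with $n$ odd, it equals $n$ at every vertex.
\item For $G=K_n$ minus a matching of $(n-1)/2$ edges, with $n\ge5$ odd, it is $(n-2)+2=n$ at each matched vertex and $(n-1)+2$ at the unmatched one. This is exactly your case $\Delta(\overline H)=1$, and the check $d+2\le n-1$ fails because $d=n-2$.
\item More generally, a balanced complete multipartite graph $K_{s,\dots,s}$ with at least three parts and $n$ odd has $d(v)+\theta(G[N_G(v)])=(n-s)+s=n$ for all $v$. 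Here $\overline H$ is a union of copies of $K_{\Delta+1}$ with $\Delta=n-1-d$, and all vertices are equivalent. Re-choosing $v$, pulling a vertex into a single edge with $v$, or merging colour classes cannot lower the cost to $n-1$.
\item The odd-cycle exception also occurs at every vertex. If $\overline G$ is a disjoint union of an odd number $q\ge3$ of $5$-cycles, every vertex has $d=n-3$ and $\theta(G[N_G(v)])=3$.
\end{itemize}

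Consequently the step $n-1\to n$ cannot close for odd $n$ if the only information about $G'$ is the bound $2\lfloor (n-1)^2/4\rfloor$. The missing unit must be recovered inside $G'$. Since the theorem is true and the cost at $v$ is at least $n$ in these graphs, some choice of the $K_i$ must leave a $G'$ of weight at most $2\lfloor (n-1)^2/4\rfloor-1$. Proving this needs a genuinely stronger inductive statement, or a different argument altogether, as in the cited papers. One possibility is a description of the even-order-$m$ graphs whose minimum clique-partition weight equals $m^2/2$, together with a proof that $G'$ is never such a graph whenever the cost at $v$ is $n$. Brooks' theorem correctly locates the obstruction, but your proposal contains no mechanism that removes it.
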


\begin{theorem}[Caro \cite{Caro1979} and Wei \cite{Wei1981}]
\label{thm:carowei}
If $G$ has $n$ vertices and $m$ edges, then
\[
 \alpha(G)\geq
 \sum_{v\in V(G)}\frac{1}{d_G(v)+1}
 \geq\frac{n^2}{2m+n}.
\]
Consequently, if $\alpha(G)\leq x$, then
\begin{equation}\label{eq:carowei-consequence}
  e(G)\geq\frac{n^2}{2x}-\frac{n}{2}.
\end{equation}
\end{theorem}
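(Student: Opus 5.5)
The statement to prove is the Caro--Wei theorem (Theorem~\ref{thm:carowei}) together with its consequence \eqref{eq:carowei-consequence}. I will give the standard probabilistic proof of the first inequality, then get the second by convexity, and finally rearrange.

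\emph{Step 1: the bound $\alpha(G)\ge\sum_v 1/(d_G(v)+1)$.} Choose a uniformly random linear order of $V(G)$. Let $I$ be the set of vertices $v$ that come before all of their neighbours in this order. $I$ is independent: if $u,v\in I$ were adjacent, each would have to precede the other. For a fixed $v$, the event $v\in I$ means $v$ comes first among the $d_G(v)+1$ vertices of $\{v\}\cup N_G(v)$. By symmetry this has probability $1/(d_G(v)+1)$. By linearity of expectation, $\mathbb E|I|=\sum_v 1/(d_G(v)+1)$. Hence some order gives an independent set at least this large, which proves the first inequality.

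\emph{Step 2: the bound $\sum_v 1/(d_G(v)+1)\ge n^2/(2m+n)$.} The function $t\mapsto 1/t$ is convex on $t>0$. Apply Jensen's inequality, or equivalently Cauchy--Schwarz in the form $\bigl(\sum_v 1/a_v\bigr)\bigl(\sum_v a_v\bigr)\ge n^2$, with $a_v=d_G(v)+1$. Since $\sum_v a_v=2m+n$, we get $\sum_v 1/a_v\ge n^2/(2m+n)$.

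\emph{Step 3: the consequence \eqref{eq:carowei-consequence}.} Suppose $\alpha(G)\le x$, with $x>0$ (this holds automatically when $n\ge1$). Combining Steps 1 and 2 gives $x\ge n^2/(2m+n)$. Rearranging, $2m+n\ge n^2/x$, so $m\ge n^2/(2x)-n/2$.

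There is no real obstacle here. The only points needing care are justifying the symmetry claim in Step 1 and noting that $2m+n>0$ whenever $n\ge1$, so the division in Step 3 is legitimate.
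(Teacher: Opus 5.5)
Your proof is correct: the random-order argument gives $\alpha(G)\ge\sum_v 1/(d_G(v)+1)$, the Cauchy--Schwarz (AM--HM) step with $\sum_v (d_G(v)+1)=2m+n$ gives the second inequality, and rearranging $x\ge n^2/(2m+n)$ (where $x\ge\alpha(G)\ge 1$ once $n\ge 1$) yields \eqref{eq:carowei-consequence}. The paper gives no proof of this statement; it cites Caro and Wei and treats the consequence as the same one-line rearrangement as your Step~3, so your argument is the standard proof that the paper takes for granted.
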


\begin{theorem}[Vizing \cite{Vizing1964}]\label{thm:vizing}
Every graph $G$ has a proper edge-coloring with at most
$\Delta(G)+1$ colors.
\end{theorem}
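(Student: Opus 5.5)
We would prove Theorem~\ref{thm:vizing} by induction on $e(G)$, using the classical fan-and-Kempe-chain argument. Put $\Delta=\Delta(G)$ and work with the color set $\{1,\dots,\Delta+1\}$. Assume $G-uv$ has a proper edge-coloring $c$ with these colors; we will extend it to $uv$, possibly after recoloring. Every vertex has degree at most $\Delta$, so in any partial proper coloring each vertex \emph{misses} at least one color. Here a color is missing at a vertex if no colored edge at that vertex carries it. For a vertex $x$, let $m(x)$ be a fixed missing color.

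\textbf{Step 1: the fan.} Build a maximal sequence of distinct neighbors $v_0=v, v_1,\dots,v_k$ of $u$ such that, for each $i\ge1$, the edge $uv_i$ is colored and $c(uv_i)=m(v_{i-1})$. If some $m(v_i)$ is also missing at $u$, we \emph{rotate the fan}: recolor $uv_j$ with $c(uv_{j+1})$ for $j<i$, and give $uv_i$ the color $m(v_i)$. By the defining property of the fan this yields a proper coloring of $G$. So we may assume no $m(v_i)$ is missing at $u$.

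\textbf{Step 2: the Kempe chain.} Let $\alpha$ be missing at $u$ and let $\beta=m(v_k)$. By assumption $\beta$ is present at $u$. By maximality of the fan, the $\beta$-edge at $u$ is $uv_{j+1}$ for some $j<k$, so $\beta=m(v_j)$ as well. Consider the component $P$ of the subgraph formed by the $\alpha$- and $\beta$-colored edges that contains $v_k$. This component is a path, since every vertex meets at most one edge of each color. Swapping $\alpha$ and $\beta$ on $P$ keeps the coloring proper.

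\textbf{Step 3: case analysis on where $P$ ends.} This is the main obstacle. Since $u$ misses $\alpha$, $u$ is an endpoint of its own $\alpha/\beta$ path, which starts with the $\beta$-edge $uv_{j+1}$ and continues to $v_{j+1}$. Because $\beta$ is missing at both $v_j$ and $v_k$, each of them is an endpoint of its $\alpha/\beta$ path. A path has only two endpoints, so $v_j$ and $v_k$ cannot both lie on the path starting at $u$.
\begin{itemize}
\item If $P$ (the path from $v_k$) avoids $u$, swap colors on $P$. Now $\alpha$ is missing at $v_k$ and still at $u$, and the fan relations are unaffected. Rotating the whole fan up to $v_k$ and coloring $uv_k$ with $\alpha$ then finishes.
\item Otherwise, first rotate the fan up to $v_j$. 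This frees $uv_{j+1}$, so $\beta$ becomes missing at $u$ and $v_j$ is the new end of the fan. Then swap colors on the $\alpha/\beta$ path from $v_j$, which now avoids $u$. After the swap $\alpha$ is missing at both $u$ and $v_j$, so we color $uv_j$ with $\alpha$.
\end{itemize}
The delicate point is checking that in each case the swap preserves the fan relations $c(uv_i)=m(v_{i-1})$ needed for the rotation. We would handle this by noting that the swapped path avoids $u$, so it touches no edge at $u$, and by rechoosing the missing colors where a swap changes them.
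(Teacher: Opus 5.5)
Steps 1 and 2 are correct, and the fan-plus-Kempe-chain strategy is the right one. The paper only cites Vizing's theorem, so there is no internal proof to compare with. The gap is in your first case of Step 3.

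\textbf{The gap.} You only assume that the $\alpha/\beta$ path $P$ from $v_k$ avoids $u$, but $P$ may then end at $v_j$; the path from $u$ then simply ends somewhere else.
\begin{itemize}
\item In that situation $v_j$ carries an $\alpha$-edge of $P$, and the swap recolors it $\beta$.
\item So $\beta=c(uv_{j+1})$ is no longer missing at $v_j$, and the fan relation $c(uv_{j+1})=m(v_j)$ is destroyed.
\item The full rotation up to $v_k$ gives $uv_j$ the color $c(uv_{j+1})=\beta$, which creates two $\beta$-edges at $v_j$.
\end{itemize}
For instance, take $\Delta=3$ and edges $uv,uv_1,uv_2,vx,xy,yv_2$. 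Color $G-uv$ by $c(uv_1)=c(xy)=2$, $c(uv_2)=3$, $c(vx)=c(yv_2)=1$, and choose $m(v)=m(v_2)=2$, $m(v_1)=3$, $\alpha=1$. Then the fan is $v,v_1,v_2$ (so $k=2$), $j=0$, and $P=v_2yxv$ avoids $u$. After your swap and rotation, both $uv$ and $vx$ have color $2$.

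Your proposed remedy of rechoosing missing colors cannot help. The fan relation forces $m(v_j)$ to be exactly $c(uv_{j+1})=\beta$, and any other choice breaks the chain at index $j+1$.

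\textbf{The fix.} Split on the path from $v_j$ rather than the path from $v_k$.
\begin{itemize}
\item If the $\alpha/\beta$ path from $v_j$ avoids $u$ (this covers your second case and the bad subcase above), rotate only up to $v_j$, swap that path, and color $uv_j$ with $\alpha$. This works because the rotated colors $m(v_0),\dots,m(v_{j-1})$ cannot be $\alpha$ or $\beta$. They differ from $\beta=m(v_j)$ because $m(v_0),\dots,m(v_{k-1})$ are the distinct colors of the edges $uv_1,\dots,uv_k$. They differ from $\alpha$ because they are present at $u$. Hence the rotation leaves the $\alpha/\beta$-subgraph unchanged.
\item Otherwise the path from $v_j$ joins $v_j$ to $u$, so the path from $v_k$ avoids both $u$ and $v_j$. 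Swapping it changes no color at $u$ and no $m(v_i)$ with $i<k$, since the only such color lying in $\{\alpha,\beta\}$ is $m(v_j)$ and $v_j$ is untouched. You may then set $m(v_k)=\alpha$ and apply Step 1.
\end{itemize}

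\textbf{A slip in your second case.} Rotating up to $v_j$ uncolors $uv_j$, not $uv_{j+1}$, and $\beta$ stays present at $u$ on $uv_{j+1}$. Your conclusion there survives only because of the invariance of the $\alpha/\beta$-subgraph described above.
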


\begin{lemma}[{Erd\H{o}s--Faudree--Ordman
  \cite[Lemma~4]{EFO1988}}]\label{lem:efo}
Let $G$ be a graph and let $V(G)=A\mathbin{\dot\cup}B$. If $G$ has $a$ edges in $A$, $b$ edges in $B$ and $s$ edges connecting $A$ and $B$, then
\[
  \cpn(G)\geq s-a-b-\min\{a,b\}.
\]
\end{lemma}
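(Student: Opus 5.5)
The plan is to take an arbitrary clique partition $\calQ$ of $G$ and bound the number of crossing edges clique by clique. For $Q\in\calQ$, write $x_Q=|Q\cap A|$ and $y_Q=|Q\cap B|$. Since $\calQ$ partitions $E(G)$, the edge counts decompose as
\[
 s=\sum_{Q\in\calQ}x_Qy_Q,\qquad a=\sum_{Q\in\calQ}\binom{x_Q}{2},\qquad b=\sum_{Q\in\calQ}\binom{y_Q}{2}.
\]
The lemma then follows if we establish the per-clique inequality
\begin{equation*}
 xy\leq 1+\binom{y}{2}+2\binom{x}{2}\qquad\text{for all integers } x,y\geq 0,
\end{equation*}
together with its mirror image in which the roles of $x$ and $y$ are swapped.

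Indeed, summing the first inequality over $Q\in\calQ$ gives $s\leq|\calQ|+b+2a$. Summing the mirror inequality gives $s\leq|\calQ|+a+2b$. Taking the better of the two yields
\[
 s\leq|\calQ|+a+b+\min\{a,b\}.
\]
Applying this to a minimum clique partition, with $|\calQ|=\cpn(G)$, gives $\cpn(G)\geq s-a-b-\min\{a,b\}$. By symmetry, it suffices to prove the first inequality.

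To prove it, set
\[
 f(x,y)=1+\frac{y(y-1)}{2}+x(x-1)-xy,
\]
and view $f$ as a quadratic in $y$ with leading coefficient $\tfrac12$. Its discriminant, up to a positive factor, is
\[
 \bigl(x+\tfrac12\bigr)^2-2\bigl(x^2-x+1\bigr)=-x^2+3x-\tfrac74 .
\]
This is negative unless $x\in\bigl[\tfrac{3-\sqrt2}{2},\tfrac{3+\sqrt2}{2}\bigr]\approx[0.79,\,2.21]$. Hence $f>0$ for every integer $x\notin\{1,2\}$, in particular for $x=0$ and for all $x\geq3$.

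The main obstacle is that for $x\in\{1,2\}$ the real minimum of $f$ is negative, so integrality of $y$ must be used. Here I would check directly. For $x=1$, $f=\tfrac{y^2-3y+2}{2}=\tfrac{(y-1)(y-2)}{2}\geq0$ for every integer $y$. For $x=2$, $f=\tfrac{y^2-5y+6}{2}=\tfrac{(y-2)(y-3)}{2}\geq0$ for every integer $y$. This completes the per-clique inequality, and hence the lemma. Equality occurs at $(x,y)\in\{(1,1),(1,2),(2,2),(2,3)\}$, which shows that the coefficient $2$ on $\binom{x}{2}$ cannot be lowered.
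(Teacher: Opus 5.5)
The paper gives no proof of this lemma. It is quoted from \cite[Lemma~4]{EFO1988} and used only as a black box in Proposition~\ref{prop:exact}, so there is no route of the paper's to compare against.

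Your argument is correct and gives a complete, self-contained proof. The decompositions $s=\sum x_Qy_Q$, $a=\sum\binom{x_Q}{2}$, $b=\sum\binom{y_Q}{2}$ are valid because $\calQ$ partitions $E(G)$ into cliques of $G$. Summing the inequality and its mirror yields $s\le|\calQ|+a+b+\min\{a,b\}$. Your analysis of $f$ is accurate: the discriminant handles $x\notin\{1,2\}$, and the factorizations handle $x\in\{1,2\}$.

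You can avoid the case split with the identity
\[
 2\Bigl(1+\binom{y}{2}+2\binom{x}{2}-xy\Bigr)=(y-x)(y-x-1)+(x-1)(x-2).
\]
Both terms on the right are products of consecutive integers, hence nonnegative. The identity also shows directly that equality holds exactly when $x\in\{1,2\}$ and $y\in\{x,x+1\}$.

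Your pair of inequalities is equivalent to the single per-clique bound $xy-1\le\binom{x}{2}+\binom{y}{2}+\min\bigl\{\binom{x}{2},\binom{y}{2}\bigr\}$. Your equality cases explain why the construction in Proposition~\ref{prop:exact} meets the lemma exactly. There, every $K_4$ has $(x_Q,y_Q)=(2,2)$, every leftover crossing edge has $(1,1)$, and $a=b$. So each step of your chain is tight for that partition.
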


\section{The proofs}
\subsection{The lower bound}\label{sec:construction}

The lower-bound graph is obtained by joining two cluster graphs.  For
positive integers $h$ and $k$, let $hK_k$ denote the disjoint
union of $h$ copies of $K_k$, and define
$G_{h,k}=(hK_k)\join(hK_k).$
We write
\[
 A=A_1\dot\cup\cdots\dot\cup A_h,
 \qquad
 B=B_1\dot\cup\cdots\dot\cup B_h,
\]
where each $A_i$ and each $B_j$ induces a copy of $K_k$.  There are no
edges between distinct cliques on the same side, and all edges between
$A$ and $B$ are present.

For even $k$, both clique parameters can be determined exactly.

\begin{proposition}\label{prop:exact}
If $k$ is even and $h\geq k-1$, then
$\cc(G_{h,k})=h^2$ and
$\cpn(G_{h,k})
  =h^2k^2-\frac32hk(k-1).$
\end{proposition}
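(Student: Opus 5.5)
The plan is to compute $\cc$ by a direct structural argument and to obtain $\cpn$ by matching the Erd\H{o}s--Faudree--Ordman bound (Lemma~\ref{lem:efo}) with an explicit partition into copies of $K_4$.

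\textbf{The covering number.} For $\cc(G_{h,k})\le h^2$, note that each set $A_i\cup B_j$ is a clique. The $h^2$ cliques $A_i\cup B_j$ cover every edge: an edge inside $A_i$ lies in $A_i\cup B_1$, an edge inside $B_j$ lies in $A_1\cup B_j$, and a crossing edge between $A_i$ and $B_j$ lies in $A_i\cup B_j$.

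For $\cc(G_{h,k})\ge h^2$, take a crossing edge $ab$ with $a\in A_i$ and $b\in B_j$. Any clique containing $a$ cannot contain vertices of $A_{i'}$ with $i'\ne i$, and any clique containing $b$ cannot contain vertices of $B_{j'}$ with $j'\ne j$. So every clique containing $ab$ lies inside $A_i\cup B_j$. Hence the $h^2$ pairs $(i,j)$ need pairwise distinct cliques, and $\cc(G_{h,k})=h^2$.

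\textbf{Lower bound on $\cpn$.} Apply Lemma~\ref{lem:efo} to the partition $A\dot\cup B$. Here $s=h^2k^2$ and $a=b=h\binom k2=hk(k-1)/2$. This gives
\[
\cpn(G_{h,k})\ge s-a-b-\min\{a,b\}=h^2k^2-\tfrac32hk(k-1).
\]

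\textbf{Upper bound on $\cpn$.} The count suggests the equality case: every internal edge $e\subset A_i$ is paired with an internal edge $f\subset B_j$, and $e\cup f$ spans a $K_4$ containing $2$ internal and $4$ crossing edges. If this yields $a$ edge-disjoint $K_4$'s, the remaining crossing edges are taken as single $K_2$'s. The total is then $a+(s-4a)=s-3a$, which matches the lower bound.

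The only constraint is edge-disjointness on crossing edges. Two such $K_4$'s, built from $(e,f)$ and $(e',f')$, share a crossing edge exactly when $e\cap e'\ne\emptyset$ and $f\cap f'\ne\emptyset$. Since $k$ is even, I will $1$-factorize each $A_i$ into perfect matchings $M_{i,1},\dots,M_{i,k-1}$ and each $B_j$ into perfect matchings $N_{j,1},\dots,N_{j,k-1}$, each of size $k/2$. I then pair $M_{i,c}$ with $N_{i+c \bmod h,\;c}$. The map $(i,c)\mapsto(i+c,c)$ is a bijection on $\mathbb Z_h\times\{1,\dots,k-1\}$. Within each paired couple, I match the $k/2$ edges of $M_{i,c}$ bijectively with the $k/2$ edges of $N_{i+c,c}$.

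\textbf{Checking disjointness.} This is the step needing care, and it is where $h\ge k-1$ enters. There are two cases.

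\begin{itemize}
\item Two $K_4$'s coming from the same couple use vertex-disjoint edges $e,e'$ (both in one perfect matching), so they cannot conflict.
\item Two $K_4$'s coming from different couples with $e,e'$ meeting must have $e,e'$ in the same $A_i$, with different colors $c\ne c'$. Then $f,f'$ lie in $B_{i+c}$ and $B_{i+c'}$. Since $1\le c,c'\le k-1\le h$ and $c\ne c'$, we have $i+c\not\equiv i+c'\pmod h$. So $f$ and $f'$ lie in different cliques and are disjoint.
\end{itemize}

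Hence the $K_4$'s are pairwise edge-disjoint. Together with the leftover crossing edges, they form a clique partition of size $h^2k^2-\tfrac32hk(k-1)$, which completes the proof.
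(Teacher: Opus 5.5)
Your proof is correct and follows essentially the same route as the paper: the cliques $A_i\cup B_j$ for the covering, with your observation that any clique through a crossing $A_i$--$B_j$ edge lies in $A_i\cup B_j$ being the same fact the paper phrases via an induced $K_{h,h}$ on representatives; Lemma~\ref{lem:efo} for the lower bound on $\cpn$; and a one-factorization with the cyclic shift $M_{i,c}\leftrightarrow N_{i+c,c}$ giving $h\binom k2$ edge-disjoint copies of $K_4$. Your explicit edge-disjointness criterion (two such $K_4$'s share a crossing edge only if both $e\cap e'$ and $f\cap f'$ are nonempty) makes the verification somewhat more careful than the paper's.
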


\begin{proof}
The vertex sets
$A_i\cup B_j,$  $1\leq i,j\leq h,$ induce cliques, i.e. $G[A_i]\cong G[B_j]=K_k$ and
form a clique cover of size $h^2$, so $\cc(G_{h,k})\leq h^2$.  For the
reverse inequality, choose $a_i\in A_i$ and $b_j\in B_j$ for all
$i,j$.  These
vertices induce a copy of $K_{h,h}$.  Every clique of $G_{h,k}$ contains at
most one of the selected vertices in $A$ and at most one of the
selected vertices in $B$, and consequently covers at most one edge
of this $K_{h,h}$.  Every clique covering therefore has at least
$h^2$ members, and hence
$\cc(G_{h,k})=h^2.$

We next determine the partition number. Apply Lemma~\ref{lem:efo} to
the cut $(A,B)$.  The numbers of crossing and internal edges are
$s=h^2k^2, u=v=h\binom k2.$
It follows that
\begin{equation}\label{eq:construction-lower}
 \cpn(G_{h,k})
 \geq h^2k^2-3h\binom k2
 =h^2k^2-\frac32hk(k-1).
\end{equation}

It remains to construct a partition
attaining equality in \eqref{eq:construction-lower}.  Since $k$ is
even, $K_k$ has a one-factorization
\cite[Section~3.3]{Stinson2004}.  Label its $k-1$
one-factors by $0,\ldots,k-2$, using the same labeling in each 
$A_i$ and each $B_j$. All cluster
indices are interpreted modulo $h$.

For every $i\in\{0,\ldots,h-1\}$ and
$r\in\{0,\ldots,k-2\}$, pair the $k/2$ edges in factor $r$
of $A_i$ bijectively with the $k/2$ edges in factor $r$ of
$B_{i+r}$, and include in the partition the copy of $K_4$
induced by the four endpoints of each
pair of matched edges in the partition.  Since $h\geq k-1$, the cluster pairs
$(A_i,B_{i+r})$ are distinct as $r$ varies.

Every edge internal to $A$ now lies in exactly one of these copies of
$K_4$.  The same holds in $B$: an edge in factor $r$ of
$B_j$ is paired with an edge of $A_{j-r}$.  No crossing edge is
repeated.  Within a fixed cluster pair, the factor edges form
matchings, so the corresponding $K_4$'s are vertex-disjoint, while
distinct index pairs $(i,r)$ correspond to distinct ordered cluster pairs.

There are
\[
  h\binom k2=\frac12hk(k-1)
\]
such copies of $K_4$, and they contain
\[
  4h\binom k2=2hk(k-1)
\]
crossing edges.  Use every remaining crossing edge as a two-vertex
clique.  The resulting clique partition has
\begin{align*}
 h\binom k2+
 \left(h^2k^2-4h\binom k2\right)
 &=h^2k^2-3h\binom k2=h^2k^2-\frac32hk(k-1)
\end{align*}
members.  Together with \eqref{eq:construction-lower}, this proves the assertion.
\end{proof}

By balancing the two error terms in Proposition~\ref{prop:exact}, we give the
lower bound in Theorem~\ref{thm:main}.

\begin{corollary}\label{cor:lower}
There is an absolute constant $C>0$ such that
\[
  \sigma_n\geq
  \left\lfloor\frac{n^2}{4}\right\rfloor-Cn^{4/3}
\]
for every sufficiently large integer $n$.
\end{corollary}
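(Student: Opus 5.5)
We apply Proposition~\ref{prop:exact} with $k\asymp n^{1/3}$ even and $h\asymp n^{2/3}$, and pad with isolated vertices to reach exactly $n$ vertices. For $G_{h,k}$ the proposition gives
\[
 \sigma(G_{h,k})=h^2k^2-\tfrac32hk(k-1)-h^2 .
\]
Since $|V(G_{h,k})|=2hk$, we have $h^2k^2=(2hk)^2/4$. The deficit for a graph on $N=2hk$ vertices is therefore $\tfrac32hk(k-1)+h^2$. With $N$ fixed this is roughly $\tfrac34Nk+N^2/(4k^2)$, which is minimized when $k\asymp N^{1/3}$, and then both terms are $\asymp N^{4/3}$.

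Concretely, given large $n$, let $k$ be the even integer closest to $n^{1/3}$ (so $k=n^{1/3}+O(1)$), and set $h=\lfloor n/(2k)\rfloor$. Then $h\asymp n^{2/3}\geq k-1$ for large $n$, so Proposition~\ref{prop:exact} applies. Let $N=2hk$, so that $n-2k<N\le n$. Form $G$ by adding $n-N$ isolated vertices to $G_{h,k}$. Isolated vertices change neither $\cpn$ nor $\cc$, so $\sigma_n\ge\sigma(G)=\sigma(G_{h,k})$.

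It remains to compare with $\lfloor n^2/4\rfloor$. We have
\[
 \lfloor n^2/4\rfloor-h^2k^2\le \tfrac{n^2-N^2}{4}=\tfrac{(n-N)(n+N)}{4}\le \tfrac{2k\cdot 2n}{4}=kn=O(n^{4/3}).
\]
Also $\tfrac32hk(k-1)\le \tfrac32\cdot\tfrac n2\cdot k=O(n^{4/3})$, and $h^2\le n^2/(4k^2)=O(n^{4/3})$. Adding these three bounds gives $\sigma_n\ge\lfloor n^2/4\rfloor-Cn^{4/3}$ for an absolute constant $C$ and all sufficiently large $n$.

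No step is a real obstacle. The only care needed is the rounding: choosing $k$ even, keeping $h\ge k-1$, and checking that the loss from the $n-N<2k$ isolated vertices is $O(n^{4/3})$ rather than something larger. The estimate above shows this loss is at most $kn$.
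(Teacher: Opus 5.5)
Your proposal is correct and follows essentially the same route as the paper. Both take an even $k\asymp n^{1/3}$ and $h=\lfloor n/(2k)\rfloor$, pad $G_{h,k}$ with fewer than $2k$ isolated vertices, and bound each of $\lfloor n^2/4\rfloor-h^2k^2$, $\tfrac32hk(k-1)$ and $h^2$ by $O(n^{4/3})$; the only difference is that the paper takes $k$ to be the smallest even integer at least $n^{1/3}$ rather than the nearest one.
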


\begin{proof}
Let $k$ be the smallest even integer at least $n^{1/3}$, and
put
$h=\left\lfloor\frac{n}{2k}\right\rfloor.$
Then
$n^{1/3}\leq k<n^{1/3}+2,$
and $h\geq k-1$ for all sufficiently large $n$.  Add
$n-2hk$ isolated vertices to $G_{h,k}$; this changes neither
$\cpn$ nor $\cc$.  Proposition~\ref{prop:exact} gives
\begin{align*}
 \cpn(G_{h,k})-\cc(G_{h,k})=h^2k^2-\frac32hk(k-1)-h^2=\frac{(2hk)^2}{4}-O(hk^2+h^2).
\end{align*}
\noindent Since $hk=O(n)$, $k=O(n^{1/3})$, and
$h=O(n^{2/3})$, we have
$hk^2+h^2=O(n^{4/3}).$
Furthermore, $0\leq n-2hk<2k$, and hence
\[
  0\leq\frac{n^2-(2hk)^2}{4}
  =\frac{(n-2hk)(n+2hk)}4
  =O(nk)=O(n^{4/3}).
\]
Increasing $C$ accounts for
the rounding in $h=\lfloor\frac{n}{2k}\rfloor$.
\end{proof}

\subsection{The upper bound}\label{sec:upper}

We first prove the upper bound for graphs with even order.
Throughout this
section, let $G$ be a graph on $2n$ vertices, and write
$p=\cpn(G),t=\cc(G),\delta=n^2-p,D=\delta+t.$
\noindent By \eqref{eq:egp}, both $\delta$ and $D$ are
nonnegative. 

\begin{proposition}\label{prop:upper-even}
There is an absolute constant $c_0>0$ such that every graph $G$ on
$2n$ vertices, with $n$ sufficiently large, satisfies
\[
  \cpn(G)-\cc(G)\leq n^2-c_0n^{4/3}.
\]
\end{proposition}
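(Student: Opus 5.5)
The plan is to show $D=\delta+t\gtrsim n^{4/3}$. I will do this by playing two estimates on the number of edges inside the two sides of an almost-complete bipartite cut against each other. The lower estimate on internal edges comes from $t=\cc(G)$; the upper estimate comes from $\delta$. We may assume $D\le \varepsilon n^{4/3}$ for a small constant $\varepsilon$, since otherwise there is nothing to prove, and aim for a contradiction.

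\emph{Step 1 (structure from a light partition).} Among all clique partitions $\calQ$ with $w(\calQ)\le 2n^2$, which exist by Theorem~\ref{thm:weighted}, choose one with the fewest members. Then $|\calQ|\ge p$, and
\[
\sum_{Q\in\calQ}(|Q|-2)\le 2n^2-2|\calQ|\le 2\delta .
\]
So at most $2\delta$ members have order at least $3$, and the two-vertex members form a graph $H$ with $e(H)\ge n^2-3\delta$. Replacing three edges of a triangle in $H$ by the triangle keeps the weight and decreases $|\calQ|$. Hence $H$ is triangle-free. By the standard stability for triangle-free graphs (or directly by counting), $H$, and therefore $G$, has a cut $(A,B)$ with two properties. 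First, $|A|,|B|=n\pm O(\sqrt{\delta})$. Second, at most $O(\delta)$ pairs between $A$ and $B$ are non-edges of $G$. Write $a,b$ for the numbers of edges of $G$ inside $A$ and inside $B$.

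\emph{Step 2 (lower bound on $a+b$ from $\cc$).} Let $I\subseteq A$ and $J\subseteq B$ be independent in $G$. Any clique contains at most one vertex of $I$ and one of $J$, so the edges between $I$ and $J$ need distinct covering cliques. This gives
\[
|I||J|-O(\delta)\le t .
\]
Taking $I$ and $J$ maximum yields $\alpha(G[A])\,\alpha(G[B])\lesssim D$. Then \eqref{eq:carowei-consequence} gives
\[
a+b\gtrsim \frac{n^2}{\alpha(G[A])}+\frac{n^2}{\alpha(G[B])}\gtrsim \frac{n^2}{\sqrt D}.
\]
The $O(\delta)$ term must be kept below $|I||J|/2$. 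If it is not, then $\alpha(G[A])\alpha(G[B])\lesssim \delta$ anyway, so the bound persists.

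\emph{Step 3 (upper bound on $a+b$ by triangle repacking).} This is the main obstacle. I would properly colour the edges of $G[A]$ with $\Delta(G[A])+1$ colours by Theorem~\ref{thm:vizing}. Then I assign to distinct colour classes distinct vertices $w\in B$, and for each edge $uv$ of that class use the triangle $uvw$. Since each class is a matching, the crossing edges $uw$ and $vw$ are used once. Doing the same with $G[B]$ and vertices of $A$, and covering everything else by single edges, gives a partition of size about $s+a+b-2(a+b)$ up to losses. Each triangle saves two members, so
\[
p\le n^2-(a+b)+O(\delta)+(\text{losses}),\qquad\text{that is,}\qquad a+b\lesssim \delta .
\]
The losses come from missing crossing edges, which affect $O(\delta)$ triangles, and from vertices of huge internal degree, where $\Delta+1>|B|$. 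I would first discard the edges at high-degree vertices, showing there are few since $a+b$ is already $\lesssim\delta$ in aggregate, or else simply apply the argument to a maximal subgraph of bounded maximum degree. Alternatively, I would invoke Lemma~\ref{lem:efo}-type counting to control the size imbalance $|A|-|B|$.

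Combining Steps 2 and 3 gives $n^2/\sqrt D\lesssim a+b\lesssim \delta\le D$. Hence $D^{3/2}\gtrsim n^2$, that is, $D\gtrsim n^{4/3}$, which contradicts the choice of $\varepsilon$ and yields $\cpn(G)-\cc(G)=n^2-D\le n^2-c_0n^{4/3}$.
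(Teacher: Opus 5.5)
Steps~1 and~2 are sound and match the paper. Step~1 is the light partition, the triangle-free $H$ with $e(H)\ge n^2-3\delta$, and the stability cut; Step~2 is $\alpha(G[A])\alpha(G[B])\le t+O(\delta)$ combined with Caro--Wei.

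\textbf{The gap is in Step~3: the $A$-side and $B$-side triangle families need not be edge-disjoint.} Take a crossing pair $xy$ with $x\in A$ and $y\in B$. An $A$-triangle uses it whenever $y$ is the apex of a colour class of $G[A]$ meeting $x$. A $B$-triangle uses it whenever $x$ is the apex of a colour class of $G[B]$ meeting $y$. Nothing in your construction prevents both from happening.

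A cleverer assignment cannot remove these collisions. Each triangle uses two crossing edges, so the two families together would need about $2(a+b)$ distinct crossing edges. There are at most $|A||B|\le n^2$ of them. So once $a+b$ is a sizeable fraction of $n^2$ (certainly once $a+b>n^2/2$), no choice of colourings and apexes makes the families disjoint. That regime is exactly what Step~3 must rule out, so it cannot be assumed away. Your list of losses (missing crossing edges, high-degree vertices) does not account for this.

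The natural fallback also fails. If you repack only the larger side and cover the other side's internal edges by single edges, you get $p\le n^2+\min\{a,b\}-\max\{a,b\}+O(\delta)+(\text{losses})$. That yields only $|a-b|\lesssim\delta$, which does not combine with Step~2.

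\textbf{The missing ingredient is already in your Step~1: restrict the light partition $\calQ$ to each side.} Intersecting its members with $A$, and discarding traces of order at most one, gives a clique partition of $G[A]$. It consists of at most $2\delta$ traces of members of order at least three, plus the two-vertex members inside $A$. Since $e(H)\le n^2$ by Mantel, $e(H[A])+e(H[B])\le (|A|-n)^2+e_{\overline H}(A,B)=O(\delta)$. Hence $G[A]$ and $G[B]$ each have clique partitions with $O(\delta)$ members.

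Now repack only the side $X$ with more internal edges, and use this $O(\delta)$-member partition on the other side $Y$. Let $\ell$ be the number of internal edges lost by discarding colour classes. This gives $p\le n^2-a_X+2\ell+2\mu+O(\delta)$, hence $\max\{a,b\}\lesssim\delta$ and $a+b\lesssim\delta$. This is how the paper argues (Lemmas~\ref{lem:transfer} and~\ref{lem:repacking}).

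Separately, your first suggestion for high-degree vertices is circular, because it presupposes $a+b\lesssim\delta$. The simple fix is to discard the $\Delta(G[X])+1-|Y|$ smallest colour classes. This loses at most a $(|X|-|Y|)/|X|=O(\sqrt{\delta}/n)$ fraction of the internal edges of $X$.
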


The first lemma extracts a dense triangle-free skeleton from a
minimum-weight clique partition.

\begin{lemma}\label{lem:skeleton}
The graph $G$ has a triangle-free spanning subgraph $H$ satisfying
\[
  e(H)\geq n^2-3\delta.
\]
\end{lemma}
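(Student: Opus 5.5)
The plan is to obtain $H$ directly from a clique partition of minimum total weight, taking as $H$ the edges that this partition uses as two-vertex cliques. Let $\calQ$ be a clique partition of $G$ minimizing $w(\calQ)$ among all clique partitions of $G$. Since $G$ has $2n$ vertices, Theorem~\ref{thm:weighted} gives
\[
w(\calQ)\le 2\left\lfloor\frac{(2n)^2}{4}\right\rfloor=2n^2 .
\]
Let $\calQ_2\subseteq\calQ$ consist of the members of order two, and let $H$ be the spanning subgraph of $G$ whose edge set is $\{e: e\in\calQ_2\}$, i.e.\ the edges forming two-vertex members of $\calQ$. Write $e_2=|\calQ_2|=e(H)$ and $m=|\calQ\setminus\calQ_2|$. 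Every member of $\calQ\setminus\calQ_2$ has order at least $3$.

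\emph{Step 1: $H$ is triangle-free.} Suppose $x,y,z$ span a triangle in $H$. Then $\{x,y\},\{y,z\},\{x,z\}$ are three members of $\calQ$. Replace them by the single clique $\{x,y,z\}$. The result is still a clique partition of $G$, since it covers exactly the same three edges once each. Its weight is $w(\calQ)-6+3<w(\calQ)$, which contradicts minimality. Note that we minimize the weight over all partitions, not only those meeting the bound of Theorem~\ref{thm:weighted}; that theorem is used only to bound the minimum.

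\emph{Step 2: counting.} Since $\calQ$ is a clique partition, $p\le|\calQ|=e_2+m$. The weight bound gives
\[
2e_2+3m\le w(\calQ)\le 2n^2,
\]
so $m\le (2n^2-2e_2)/3$. Combining these,
\[
n^2-\delta=p\le e_2+\frac{2n^2-2e_2}{3}=\frac{e_2}{3}+\frac{2n^2}{3}.
\]
Rearranging yields $e_2\ge n^2-3\delta$, which is exactly $e(H)\ge n^2-3\delta$.

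The only delicate point is the choice of $\calQ$. It must be simultaneously of small total weight, so that the counting works, and locally optimal, so that no triangle of two-vertex cliques survives. Taking a global minimizer of $w$ achieves both at once, so I expect no real obstacle beyond making this choice.
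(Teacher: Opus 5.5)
Your proof is correct and follows the paper's argument: the same minimum-weight partition $\calQ$, the same $H$ built from its two-vertex members, the same triangle-replacement step, and a count ($2e_2+3m\le 2n^2$ together with $e_2+m\ge p$) that is just a rearrangement of the paper's bound $r\le 2\delta$. The one point you leave implicit, and which the paper states, is that minimality of $w(\calQ)$ also excludes one-vertex members, since deleting one lowers the weight; this is what justifies your claim that every member outside $\calQ_2$ has order at least $3$.
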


\begin{proof}
Choose a minimum-weight clique
partition $\calQ$;
it has no one-vertex member.  By
Theorem~\ref{thm:weighted},
$w(\calQ)\leq2n^2.$
Let $m=|\calQ|$, and let $r$ be the number of members of $\calQ$
of order at least three.  Since $m\geq p=n^2-\delta$, we have
\[
  2n^2\geq w(\calQ)\geq2m+r
  \geq2(n^2-\delta)+r.
\]
Thus
\begin{equation}\label{eq:number-large-blocks}
  r\leq2\delta.
\end{equation}

Let $H$ consist of the edges occurring as two-vertex members of
$\calQ$.  Since the members of $\calQ$ partition $E(G)$,
$e(H)=m-r\geq n^2-\delta-2\delta=n^2-3\delta.$
If $H$ contained a triangle, the three corresponding two-vertex
members of $\calQ$ could be
replaced by that triangle.  The total weight of these three
members would decrease
$6$ to $3$, contrary to the choice of $\calQ$.  Thus, $H$ is
triangle-free.
\end{proof}

We shall use the following elementary quantitative form of stability of Mantel's theorem.

\begin{lemma}\label{lem:cut}
Let $H$ be a triangle-free graph on $2n$ vertices, and write
\[
  e(H)=n^2-\eta.
\]
Then $V(H)$ has a partition $A\dot\cup B$ such that
$H[A]$ is empty, $e(H[B])\leq\eta$,
$e_{\overline H}(A,B)\leq2\eta,$
$\bigl||A|-n\bigr|\leq\sqrt{2\eta}.$
\end{lemma}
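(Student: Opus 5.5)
Take a vertex $v$ of maximum degree $\Delta$ in $H$ and let $A=N_H(v)$, $B=V(H)\setminus A$. Since $H$ is triangle-free, $A$ is independent, so $H[A]$ is empty. The remaining three claims all come from the standard Mantel counting argument.

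First I bound the edges inside $B$. Every edge of $H$ has at least one endpoint in $B$, because $A$ is independent. Summing degrees over $B$ gives
\[
  \sum_{u\in B} d_H(u)=e_H(A,B)+2e(H[B]) = e(H)+e(H[B]).
\]
Each term is at most $\Delta=|A|$, so
\[
  e(H)+e(H[B])\le |A|\,|B|\le n^2.
\]
Since $e(H)=n^2-\eta$, this yields $e(H[B])\le \eta$.

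Next I get the balance condition. The same inequality gives $|A||B|\ge e(H)=n^2-\eta$. Writing $|A|=n+x$ and $|B|=n-x$, we have $|A||B|=n^2-x^2$. Hence $x^2\le\eta$, so $\bigl||A|-n\bigr|\le\sqrt{\eta}\le\sqrt{2\eta}$.

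Finally I count the missing crossing edges. By definition
\[
  e_{\overline H}(A,B)=|A||B|-e_H(A,B).
\]
Since $e_H(A,B)=e(H)-e(H[B])\ge n^2-\eta-e(H[B])$, we get
\[
  e_{\overline H}(A,B)\le |A||B|-n^2+\eta+e(H[B])\le \eta+\eta=2\eta,
\]
using $|A||B|\le n^2$ and the bound $e(H[B])\le\eta$.

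There is no real obstacle here. The one point that needs care is the identity $\sum_{u\in B}d_H(u)=e(H)+e(H[B])$, which holds because no edge of $H$ lies inside $A$; that is where the choice $A=N_H(v)$ for a maximum-degree vertex is used.
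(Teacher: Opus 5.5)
Your proof is correct, but it takes a different and simpler route than the paper. The paper does not fix a maximum-degree vertex. Instead it double counts, using $N_H(x)\cap N_H(y)=\varnothing$ for $xy\in E(H)$, to get $\sum_v e(H[V(H)\setminus N_H(v)])=2n\,e(H)-\sum_v d_H(v)^2$. It bounds this sum by $2n\eta$ via Cauchy--Schwarz and picks a vertex $v$ at or below the average. The balance and missing-edge bounds are then read off from the identity $e_{\overline H}(A,B)=\eta+e(H[B])-(|A|-n)^2$, using only the trivial fact $e_{\overline H}(A,B)\ge 0$.

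Your argument is the classical maximum-degree proof of Mantel's theorem, run in stability form. The single inequality $\sum_{u\in B}d_H(u)\le |A||B|$ replaces both the averaging and Cauchy--Schwarz. It also gives $\eta\ge 0$ for free, so no separate appeal to Mantel is needed. It is also slightly sharper: from $e(H)+e(H[B])\le |A||B|=n^2-(|A|-n)^2$ you get $(|A|-n)^2+e(H[B])\le\eta$. Hence $\bigl||A|-n\bigr|\le\sqrt{\eta}$ rather than $\sqrt{2\eta}$. Equivalently, you use $e_{\overline H}(A,B)\ge 2e(H[B])$ where the paper uses only $e_{\overline H}(A,B)\ge 0$. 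This improvement is not needed, since the later lemmas use only the stated bounds.

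What the paper's averaging buys in return is robustness. It shows that a typical vertex, not just a specially chosen one, gives a good cut: by Markov, at least half of the vertices $v$ satisfy $e(H[V(H)\setminus N_H(v)])\le 2\eta$. That robustness is not exploited in the paper.
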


\begin{proof}
Mantel's theorem \cite{Mantel1907} gives $\eta\geq0$.  For
$v\in V(H)$, put
$B_v=V(H)\setminus N_H(v).$
For each $xy\in E(H)$, triangle-freeness implies
$N_H(x)\cap N_H(y)=\varnothing$.  Therefore,
\begin{align*}
 \sum_{v\in V(H)}e(H[B_v])
 &=\sum_{xy\in E(H)}
   \bigl(2n-d_H(x)-d_H(y)\bigr)=2ne(H)-\sum_{v\in V(H)}d_H(v)^2.
\end{align*}
By Cauchy--Schwarz inequality,
\[
  \sum_{v\in V(H)}d_H(v)^2
  \geq\frac{(2e(H))^2}{2n}
  =\frac{2e(H)^2}{n}.
\]
It follows that
\[
 \sum_{v\in V(H)}e(H[B_v])
 \leq\frac{2e(H)(n^2-e(H))}{n}
 \leq2n\eta.
\]
Choose $v$ such that $e(H[B_v])\leq\eta$, and set
$A=N_H(v), B=B_v.$
Then $H[A]$ is edgeless.  Write $a=|A|$,
$b=e(H[B])$, and $$\mu_H=e_{\overline H}(A,B)=|A||B|-e_H(A,B).$$  Since every
edge of $H$ either crosses the cut or lies in $B$,
\begin{align*}
 \mu_H
 &=a(2n-a)-\bigl(e(H)-b\bigr)=\eta+b-(a-n)^2.
\end{align*}
As $0\leq b\leq\eta$ and $\mu_H\geq0$, this identity yields
$(a-n)^2\leq\eta+b\leq2\eta$
and
$\mu_H\leq\eta+b\leq2\eta.$
This proves the lemma.
\end{proof}

We now transfer this cut to \(G\), bounding its missing crossing
edges and the clique partition numbers of \(G[A]\) and \(G[B]\).

\begin{lemma}\label{lem:transfer}
There is a partition $V(G)=A\dot\cup B$ such that, with
$\mu=e_{\overline G}(A,B)$,
\[
  \mu\leq6\delta,\qquad
  \bigl||A|-n\bigr|\leq\sqrt{6\delta},
\]
and each of $G[A]$ and $G[B]$ has a clique partition with at most
$5\delta$ members.
\end{lemma}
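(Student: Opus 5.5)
We start from the skeleton. Take a minimum-weight clique partition $\calQ$ and let $H$ be the triangle-free spanning subgraph of two-vertex members given by Lemma~\ref{lem:skeleton}, so that $e(H)\ge n^2-3\delta$. Writing $e(H)=n^2-\eta$, we have $\eta\le3\delta$. Lemma~\ref{lem:cut} then gives a partition $A\dot\cup B$ with the following properties: $H[A]$ is empty, $e(H[B])\le\eta\le3\delta$, $e_{\overline H}(A,B)\le2\eta\le6\delta$, and $\bigl||A|-n\bigr|\le\sqrt{2\eta}\le\sqrt{6\delta}$. Since $H\subseteq G$, every crossing pair that is a non-edge of $G$ is also a non-edge of $H$. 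Hence $\mu=e_{\overline G}(A,B)\le e_{\overline H}(A,B)\le6\delta$, and the balance condition transfers verbatim. This settles the first two assertions.

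For the clique partitions of $G[A]$ and $G[B]$ we restrict $\calQ$. Each member $Q\in\calQ$ meets $A$ in a clique $Q\cap A$, and the sets $\{Q\cap A: |Q\cap A|\ge2\}$ partition $E(G[A])$, because $\calQ$ partitions $E(G)$. So they form a clique partition of $G[A]$, and similarly for $B$. A member contributing an edge inside $A$ is either a two-vertex member, i.e.\ an edge of $H$ inside $A$, or a member of order at least three. There are no edges of the first kind because $H[A]$ is empty, and there are at most $r\le2\delta$ of the second kind by \eqref{eq:number-large-blocks}. Hence $\cpn(G[A])\le2\delta$. For $B$, the contributing members are the at most $e(H[B])\le3\delta$ edges of $H$ inside $B$, together with the at most $2\delta$ large members, for a total of at most $5\delta$. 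Both bounds are within $5\delta$.

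The only point needing care is that the $r$ and $\eta$ bounds refer to the same partition $\calQ$. So I will fix $\calQ$ first, build $H$ from it exactly as in the proof of Lemma~\ref{lem:skeleton}, and reuse the inequality $r\le2\delta$ established there, rather than citing Lemma~\ref{lem:skeleton} as a black box. I expect no real obstacle beyond this bookkeeping; the stated constants $6$ and $5$ come out directly.
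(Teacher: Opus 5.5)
Your proposal is correct and matches the paper's proof essentially step for step. You apply Lemma~\ref{lem:cut} to the skeleton $H$ with $\eta\le 3\delta$, transfer the bounds on missing crossing edges and on balance via $H\subseteq G$, and restrict the same minimum-weight partition $\calQ$ to each side, obtaining at most $r\le 2\delta$ members on $A$ (since $H[A]$ is empty) and at most $r+e(H[B])\le 2\delta+\eta\le 5\delta$ on $B$; fixing $\calQ$ before building $H$ is exactly how the paper handles the bookkeeping.
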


\begin{proof}
Apply Lemma~\ref{lem:cut} to the graph $H$ from
Lemma~\ref{lem:skeleton}, and put
\[
  \eta:=n^2-e(H)\leq3\delta.
\]
Since $H\subseteq G$, every crossing edge missing from $G$ is also
missing from $H$. Hence,
\[
  \mu\leq e_{\overline H}(A,B)
  \leq2\eta\leq6\delta,
\]
and Lemma~\ref{lem:cut} also gives
\[
  \bigl||A|-n\bigr|\leq\sqrt{2\eta}
  \leq\sqrt{6\delta}.
\]

Let $\calQ$ be the minimum-weight partition used in the proof of
Lemma~\ref{lem:skeleton}.  Intersect every member of $\calQ$ with
$A$, discarding intersections of order at most one.  This produces a
clique partition of $G[A]$.  Since $H[A]$ is empty, none of the
two-vertex members of $\calQ$ is retained; by
\eqref{eq:number-large-blocks}, the resulting partition has at most
$r\leq2\delta$ members.

The same restriction to $B$ may retain the $r$ members of
$\calQ$ of order at least three and the two-vertex members corresponding to $E(H[B])$.
Consequently $G[B]$ has a clique partition with at most
\[
  r+e(H[B])\leq2\delta+\eta\leq5\delta
\]
members.  The lemma follows.
\end{proof}

We next use the covering number to force many edges within the two
parts and the cut.

\begin{lemma}\label{lem:forcing}
For the partition in Lemma~\ref{lem:transfer}, put
$x=\alpha(G[A]),$ $y=\alpha(G[B]),$
 $L=e(G[A])+e(G[B]).$
Then
$xy\leq t+\mu\leq7D,$
and, if $D\leq n^2/12$, then
\[
  L\geq\frac{n^2-6D}{\sqrt{7D}}-n
  \geq\frac{n^2}{2\sqrt{7D}}-n.
\]
\end{lemma}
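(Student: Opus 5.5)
The plan is to exploit an independent-set argument in the spirit of the lower bound in Proposition~\ref{prop:exact}. Choose a maximum independent set $X\subseteq A$ of $G[A]$ and $Y\subseteq B$ of $G[B]$, so $|X|=x$ and $|Y|=y$. Any clique of $G$ contains at most one vertex of $X$ and at most one of $Y$, hence covers at most one pair of $X\times Y$. The pairs of $X\times Y$ that are edges of $G$ number at least $xy-\mu$, since at most $\mu$ crossing pairs are missing. Each of them must be covered by a distinct clique of a minimum clique cover, so $xy-\mu\le t$, i.e.\ $xy\le t+\mu$. Then $t+\mu\le t+6\delta\le 7D$, using $\mu\le6\delta$ from Lemma~\ref{lem:transfer}, $\delta,t\ge0$ and $D=\delta+t$.

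For the edge count, we may assume without loss of generality that $x\le y$. Then $x^2\le xy\le 7D$, so $x\le\sqrt{7D}$ (note $x\ge1$ as $A\neq\varnothing$; if $A$ were empty the size bound $||A|-n|\le\sqrt{6\delta}$ with $D\le n^2/12$ excludes this). Apply \eqref{eq:carowei-consequence} to $G[A]$ with $|A|=a$:
\[
e(G[A])\ge \frac{a^2}{2x}-\frac a2\ge \frac{a^2}{2\sqrt{7D}}-\frac a2 .
\]
The expected obstacle is that only one side has a small independence number. So I will use the bound on $G[A]$ alone, or, more symmetrically, apply Caro--Wei to both sides: $e(G[B])\ge b^2/(2y)-b/2$ with $b=2n-a$. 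The bound I aim for is $L\ge (n^2-6D)/\sqrt{7D}-n$, which suggests combining $a^2/(2x)+b^2/(2y)$ with $x\le y$ and $xy\le 7D$. The cleanest route is this. Since $x\le\sqrt{7D}$, we get $a^2/(2x)\ge a^2/(2\sqrt{7D})$. For the other side, use the trivial bound $e(G[B])\ge b^2/(2y)-b/2\ge0$ only when needed. Alternatively, by AM--GM, $a^2/(2x)+b^2/(2y)\ge ab/\sqrt{xy}\ge ab/\sqrt{7D}$. This symmetric form avoids the case split, and together with $-a/2-b/2=-n$ it gives $L\ge ab/\sqrt{7D}-n$.

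It remains to bound $ab$. Since $a+b=2n$, we have $ab=n^2-(a-n)^2\ge n^2-6\delta\ge n^2-6D$ by Lemma~\ref{lem:transfer}, which yields the first inequality. The second inequality follows from $D\le n^2/12$, since then $n^2-6D\ge n^2/2$. One should also check the degenerate case $x=0$ or $y=0$, i.e.\ an empty side. This is excluded because $||A|-n|\le\sqrt{6\delta}\le\sqrt{6D}\le n/\sqrt2<n$, so both parts are nonempty and $x,y\ge1$.
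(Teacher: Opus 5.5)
Your proof is correct and follows the paper's argument essentially step for step. You get $xy\le t+\mu\le 7D$ from the independent-set count against a minimum clique cover. You then apply Caro--Wei to both sides, use AM--GM, and bound $ab=n^2-(a-n)^2\ge n^2-6D$. Nonemptiness of both parts, and hence $x,y\ge1$ and $D>0$, follows from $\bigl||A|-n\bigr|\le\sqrt{6D}<n$. Your preliminary detour through $x\le y$ and the one-sided bound is unnecessary and can be dropped.
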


\begin{proof}
Choose independent sets $I\subseteq A$ and $J\subseteq B$ with
$|I|=x$ and $|J|=y$.  Every clique of $G$ contains at most one
vertex of $I$ and at most one vertex of $J$, and hence covers at
most one edge between $I$ and $J$.  At least $xy-\mu$ of these
edges are present.  A minimum clique covering therefore satisfies
$t\geq xy-\mu.$
Using Lemma~\ref{lem:transfer}, we obtain
\begin{equation}\label{eq:alpha-product}
  xy\leq t+\mu\leq t+6\delta\leq7D.
\end{equation}

Write $a=|A|$, so that $|B|=2n-a$.  Under the hypothesis
$D\leq n^2/12$, both sides of the cut are nonempty. Note that $D>0$. Indeed, as $x,y\geq 1$, the fact $xy\leq 7D$ implies that $D\geq 1/7$. Applying
\eqref{eq:carowei-consequence} to $G[A]$ and $G[B]$, and then
using the arithmetic--geometric mean inequality, gives
\begin{align*}
 L
 &\geq\frac{a^2}{2x}+\frac{(2n-a)^2}{2y}-n\geq\frac{a(2n-a)}{\sqrt{xy}}-n\geq\frac{n^2-(a-n)^2}{\sqrt{7D}}-n.
\end{align*}
By Lemma~\ref{lem:transfer},
\[
  (a-n)^2\leq6\delta\leq6D.
\]
This proves the first lower bound for $L$; the second follows from
$n^2-6D\geq n^2/2$.
\end{proof}

The reverse estimate comes from repacking internal edges into
edge-disjoint triangles.

\begin{lemma}\label{lem:repacking}
Let $A\dot\cup B$ be the partition in
Lemma~\ref{lem:transfer}, and choose $X\in\{A,B\}$ so that
$a_X=e(G[X])
  =\max\{e(G[A]),e(G[B])\}.$
If $D\leq n^2/10^4$, then
$a_X\leq36\delta.$
\end{lemma}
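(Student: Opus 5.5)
The plan is to argue by contradiction with the minimality of $p=\cpn(G)=n^2-\delta$. I will build an explicit clique partition of $G$ whose size falls below $n^2-\delta$ once $a_X$ exceeds $36\delta$. Let $Y$ be the other part of the partition from Lemma~\ref{lem:transfer}. The baseline partition consists of three pieces:
\begin{itemize}
\item every present crossing edge, taken as a two-vertex clique, giving $|A||B|-\mu\leq n^2$ members;
\item the clique partition of $G[Y]$ with at most $5\delta$ members supplied by Lemma~\ref{lem:transfer};
\item a partition of $G[X]$, which is where the saving comes from.
\end{itemize}

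The saving comes from triangles. If $uv\in E(G[X])$ and $w\in Y$ is adjacent to both $u$ and $v$, then the triangle $uvw$ can replace the three cliques $uv$, $uw$, $vw$ by a single one, saving two members. Suppose we find $T$ pairwise edge-disjoint such triangles, each using a distinct internal edge of $X$. Cover the remaining $a_X-T$ internal edges of $X$ as two-vertex cliques. This gives
\[
 n^2-\delta=p\leq n^2-2T+(a_X-T)+T+5\delta = n^2-2T+a_X+5\delta .
\]
So it suffices to show $T\geq a_X-C\delta$ with a small absolute constant $C$, or more modestly $T\geq \tfrac{a_X}{2}+O(\delta)$. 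Either bound turns the display into $a_X\leq 36\delta$ after adjusting constants.

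To produce the triangles I would use Theorem~\ref{thm:vizing}.
\begin{itemize}
\item Properly edge-colour a suitable subgraph $F\subseteq G[X]$ with $\Delta(F)+1$ colours, that is, with matchings $M_1,\dots,M_{\Delta(F)+1}$.
\item Assign to each colour $c$ its own vertex $w_c\in Y$, using distinct vertices for distinct colours, and for each $uv\in M_c$ use the triangle $uvw_c$.
\item Two such triangles could share a crossing edge $uw$ only if they had the same apex $w=w_c$, hence came from the same matching $M_c$ and shared the vertex $u$. That is impossible, so the triangles are edge-disjoint.
\item A triangle fails only when one of $uw_c$, $vw_c$ is a missing crossing edge. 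A missing edge $uw_c$ spoils at most the unique edge of $M_c$ at $u$, so at most $\mu\leq 6\delta$ edges fail.
\end{itemize}
This gives $T\geq e(F)-6\delta$.

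The main obstacle is the requirement $\Delta(F)+1\leq |Y|$. By Lemma~\ref{lem:transfer}, $|Y|\geq n-\sqrt{6\delta}\geq n/2$ because $D\leq n^2/10^4$. So I need an $F\subseteq G[X]$ with maximum degree below about $n/2$ that still keeps a constant fraction of the edges, or all but $O(\delta)$ of them.

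My first attempt is to show that vertices of internal degree greater than $n/4$ in $X$ are rare or carry few edges, using only the counting already available. Since $a_X\leq L$ and each side has a partition into at most $5\delta$ cliques, I would try to bound the edges incident to high-degree vertices in terms of $\delta$. If that fails, the fallback is to handle a high-degree vertex $u$ directly. Its internal neighbourhood, together with its large common neighbourhood in $Y$ (all but at most $\mu$ of $Y$), lets me choose distinct apices greedily for the edges at $u$. I would then remove those edges and recurse, accepting a factor-$2$ loss that the constant $36$ should absorb. If neither works cleanly, I would allow each colour class to reuse apices by splitting the colours into $\lceil(\Delta+1)/|Y|\rceil\leq 2n/|Y|\leq 4$ rounds. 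I would keep only one round, which costs a factor $4$ in $T$, and check whether the resulting inequality still closes with the stated constant.
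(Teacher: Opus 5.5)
Your triangle machinery is sound and matches the paper's: a Vizing colouring of $G[X]$, distinct apices in $Y$ for distinct colour classes, edge-disjointness from the matching property, at most $\mu\le6\delta$ spoiled candidates, and the count $p\le n^2-2T+a_X+5\delta$, i.e.\ $2T\le a_X+6\delta$. The gap is in making the colours fit into $Y$. You weakened $|Y|\ge n-\sqrt{6\delta}$ to $|Y|\ge n/2$, which turned the task into finding a subgraph of maximum degree below $n/2$. None of your three routes delivers one.

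The first route is unsupported. A partition of $G[X]$ into at most $5\delta$ cliques places no restriction on degrees, since one clique on most of $X$ is such a partition. Likewise $a_X\le L$ gives no upper bound, because $L$ is not yet controlled. The other two routes fail outright, because $2T\le a_X+6\delta$ is vacuous unless $T$ beats $a_X/2$ by a constant fraction. From $T\ge\beta a_X-6\delta$ you get $(2\beta-1)a_X\le18\delta$, so you need $\beta>1/2$, and $\beta\ge3/4$ to reach $36\delta$. Hence your claim that $T\ge a_X/2+O(\delta)$ suffices is false. A factor-$2$ loss (greedy recursion) or a factor-$4$ loss (keeping one of four rounds) is not absorbed by the constant $36$; it makes the inequality empty.

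The missing observation is that no degree reduction is needed: take $F=G[X]$ itself. Vizing gives $k\le\Delta(G[X])+1\le|X|=x_1$ colours. Since $|A|+|B|=2n$ and $\bigl||A|-n\bigr|\le\sqrt{6\delta}$, you have $k-y_1\le x_1-y_1\le2\sqrt{6\delta}$, where $y_1=|Y|$. This is a tiny fraction of $x_1\ge n-\sqrt{6D}$. If $k>y_1$, discard the $k-y_1$ smallest colour classes; by averaging this loses
\[
  \ell\le\frac{x_1-y_1}{x_1}\,a_X\le\frac{2\sqrt{6D}}{n-\sqrt{6D}}\,a_X<\frac{a_X}{18}
\]
edges when $D\le n^2/10^4$. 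What you actually need is a proper colouring with at most $|Y|$ colours, not $\Delta(F)+1\le|Y|$, and deleting whole colour classes gives it directly.

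Then $T\ge a_X-\ell-\mu$, and $2T\le a_X+6\delta$ yields $a_X\le2\ell+18\delta\le a_X/9+18\delta$, so $a_X\le\frac{81}{4}\delta\le36\delta$. It is the near-balance of the cut from Lemma~\ref{lem:transfer}, not any degree bound, that lets the colour classes fit into $Y$ at a loss of under a $1/18$ fraction.
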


\begin{proof}
Put $Y=V(G)\setminus X$, $x_1=|X|$, and $y_1=|Y|$.  Define 
\[
  \mu=e_{\overline G}(X,Y)\leq6\delta.
\]
By Theorem~\ref{thm:vizing}, the graph $G[X]$ has a proper
edge-coloring with $k\leq x_1$ colors.  Every color class is a
matching.

Let $\ell$ be the number of edges in the
discarded classes.
If $k\le y_1$, retain all color classes and put $\ell=0$.
Suppose that $k>y_1$. Discard the $k-y_1$ color classes
of smallest cardinalities. By averaging,
\[
   \ell\le \frac{k-y_1}{k}a_X
        \le \frac{x_1-y_1}{x_1}a_X.
\]
Thus, in either case,
\[
   \ell\le \frac{|x_1-y_1|}{x_1}a_X.
\]  
Lemma~\ref{lem:transfer} and $D\leq n^2/10^4$ give
\[
 |x_1-y_1|\leq2\sqrt{6\delta}\leq2\sqrt{6D},
 \qquad
 x_1\geq n-\sqrt{6D}.
\]
Consequently
\begin{equation}\label{eq:colour-loss}
 \ell\leq
 \frac{2\sqrt{6D}}{n-\sqrt{6D}}a_X
 \leq
 \frac{2\sqrt6/100}{1-\sqrt6/100}a_X
 <\frac{a_X}{18}.
\end{equation}

Assign the retained color classes
injectively to vertices of $Y$.  For an internal edge $vw$ of retained
color $c$, let $z$ be the vertex assigned to $c$, and
call $vwzv$ a candidate triangle.  The candidate is valid
if both $vz$ and $wz$ belong to $E(G)$.

Indeed, their
internal edges are distinct; moreover, they are edge-disjoint.
Moreover, a crossing edge $vz$ can occur only in the
unique retained color assigned to $z$, and at most one edge in that
color class is incident with $v$, because each color class is a
matching.

The number of invalid candidates is at most $\mu$.  Indeed, choose one
missing crossing edge from each invalid candidate.  A fixed missing
edge $vz$ lies in at most one candidate, by the same color-assignment
and matching argument.  The
resulting map is therefore injective. If $T$ denotes the number of valid
candidate triangles, then there
are at least
\begin{equation}\label{eq:number-triangles}
  T\geq a_X-\ell-\mu
\end{equation}
pairwise edge-disjoint valid candidate triangles.

Form a partition from
the $T$ triangles, the remaining two-vertex
cliques, and the partition
of $G[Y]$ supplied by Lemma~\ref{lem:transfer}.  Since
$e_G(X,Y)\leq x_1y_1\leq n^2,$
this gives
\begin{align*}
 p
 &\leq
 T+(a_X-T)+\bigl(e_G(X,Y)-2T\bigr)+5\delta\leq n^2-a_X+2\ell+2\mu+5\delta,
\end{align*}
where the second inequality follows from
\eqref{eq:number-triangles}.  Substituting
$p=n^2-\delta$, $\mu\leq6\delta$, and
\eqref{eq:colour-loss}, we obtain
\[
  a_X\leq2\ell+18\delta
  \leq\frac{a_X}{9}+18\delta.
\]
Therefore,
$a_X\leq\frac{81}{4}\delta\leq36\delta,$
as required.
\end{proof}

Lemmas \ref{lem:forcing} and \ref{lem:repacking} force
$D=\Omega(n^{4/3})$. 

\begin{proof}[Proof of Proposition~\ref{prop:upper-even}]
Recall that
$D=n^2-\bigl(\cpn(G)-\cc(G)\bigr).$
If $D>n^2/10^4$, then
$D>10^{-4}n^{4/3}.$
We may therefore assume that $D\leq n^2/10^4$.

By Lemma~\ref{lem:repacking},
\[
  L=e(G[A])+e(G[B])
  \leq2a_X\leq72\delta\leq72D.
\]
\noindent Lemma~\ref{lem:forcing} yields
$\frac{n^2}{2\sqrt{7D}}-n\leq72D,$
and hence
\begin{equation}\label{eq:master}
  \frac{n^2}{2\sqrt7}
  \leq72D^{3/2}+n\sqrt D.
\end{equation}

Suppose first that $D\geq n/72$.  Since $n\leq72D$, the preceding
lower bound for $L$ gives
$\frac{n^2}{2\sqrt{7D}}\leq144D.$
It follows that
\begin{equation}\label{eq:D-lower}
  D\geq(288\sqrt7)^{-2/3}n^{4/3}.
\end{equation}

Suppose next that $D<n/72$.  The right-hand side of
\eqref{eq:master} is smaller than
$\frac{2}{\sqrt{72}}n^{3/2},$
whereas its left-hand side is $n^2/(2\sqrt7)$.  This is impossible
for all sufficiently large $n$.

Thus, \eqref{eq:D-lower} holds whenever $D\leq n^2/10^4$ and $n$
is sufficiently large.  Taking
\[
  c_0=\min\left\{10^{-4},(288\sqrt7)^{-2/3}\right\}
\]
completes the proof.
\end{proof}

The case for graphs with odd order follows by adding one isolated vertex.

\begin{corollary}\label{cor:upper}
There is an absolute constant $C_2>0$ such that every graph $G$ on
$n$ vertices, with $n$ sufficiently large, satisfies
\[
  \cpn(G)-\cc(G)
  \leq\left\lfloor\frac{n^2}{4}\right\rfloor-C_2n^{4/3}.
\]
\end{corollary}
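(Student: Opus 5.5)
The plan is to split according to the parity of $n$, derive the even case directly from Proposition~\ref{prop:upper-even}, and reduce the odd case to the even case by adding one isolated vertex.

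If $n=2m$ is even, apply Proposition~\ref{prop:upper-even} with $m$ in the role of $n$. Since $m^2=\lfloor n^2/4\rfloor$ and $m^{4/3}=2^{-4/3}n^{4/3}$, this gives
\[
\cpn(G)-\cc(G)\le \left\lfloor\frac{n^2}{4}\right\rfloor-c_0 2^{-4/3}n^{4/3}
\]
for all sufficiently large even $n$.

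If $n=2m-1$ is odd, let $G'$ be obtained from $G$ by adding one isolated vertex. Then $G'$ has $2m$ vertices. An isolated vertex lies in no edge, and the paper's cliques may be taken to have order at least two, so $\cpn(G')=\cpn(G)$ and $\cc(G')=\cc(G)$. Proposition~\ref{prop:upper-even} applied to $G'$ yields
\[
\cpn(G)-\cc(G)\le m^2-c_0m^{4/3}.
\]

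The one point that needs care is comparing $m^2$ with $\lfloor n^2/4\rfloor$. We have $m^2=\bigl((n+1)/2\bigr)^2=(n^2+2n+1)/4$. Since $n$ is odd, $\lfloor n^2/4\rfloor=(n^2-1)/4$, so
\[
m^2=\left\lfloor\frac{n^2}{4}\right\rfloor+\frac{n+1}{2}.
\]
This linear loss is absorbed by the $n^{4/3}$ term. Indeed, $m^{4/3}\ge (n/2)^{4/3}$, and $\frac{n+1}{2}\le \frac{c_0}{2}2^{-4/3}n^{4/3}$ once $n$ is large. Therefore the odd case gives
\[
\cpn(G)-\cc(G)\le \left\lfloor\frac{n^2}{4}\right\rfloor-\frac{c_0}{2}2^{-4/3}n^{4/3}.
\]

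Taking $C_2=c_0 2^{-7/3}$ covers both parities, and "sufficiently large" is inherited from Proposition~\ref{prop:upper-even} together with the absorption step. No step is a real obstacle; the only things to check are the floor computation and that the loss is linear in $n$.
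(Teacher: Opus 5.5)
Your proposal is correct and matches the paper's proof essentially verbatim. The even case comes directly from Proposition~\ref{prop:upper-even}, and the odd case adds an isolated vertex, uses $\frac{(n+1)^2}{4}-\lfloor n^2/4\rfloor=\frac{n+1}{2}$, absorbs this linear loss into half of the $n^{4/3}$ term, and arrives at the same constant $C_2=c_0 2^{-7/3}$ (your bound $m^{4/3}\ge (n/2)^{4/3}$ is just a slightly more explicit version of the paper's bookkeeping).
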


\begin{proof}
If $n$ is even, the assertion follows directly from
Proposition~\ref{prop:upper-even}.  Suppose that $n$ is odd, and add
one isolated vertex to $G$.  Neither clique parameter changes, so
Proposition~\ref{prop:upper-even}, applied to the resulting
$(n+1)$-vertex graph, gives
\[
 \cpn(G)-\cc(G)
 \leq\frac{(n+1)^2}{4}
 -c_0\left(\frac{n+1}{2}\right)^{4/3}.
\]
Since
$\frac{(n+1)^2}{4}
  -\left\lfloor\frac{n^2}{4}\right\rfloor
  =\frac{n+1}{2},$
the linear term can be absorbed by half of the $n^{4/3}$ term for
all sufficiently large $n$. For $n = 2m$, $\cpn(G)-\cc(G)
 \leq\frac{n^2}{4}
 -c_0\left(\frac{n}{2}\right)^{4/3}.$  For odd $n$, one may take $C_2=\frac{c_0}{2^{7/3}}$.
\end{proof}

\begin{proof}[Proof of Theorem~\ref{thm:main}]
Corollary~\ref{cor:lower}~and
Corollary~\ref{cor:upper} give the lower bound and upper bound, respectively.
\end{proof}

\section*{Declaration on the Use of Generative AI}
The author used ChatGPT (OpenAI) for language polishing, structural organization, references finding, and improving the exposition of some proofs. All mathematical arguments and proofs were independently checked and finalized by the author, who takes full responsibility for the manuscript.


\begin{thebibliography}{99}

\bibitem{Caro1979}
Y.~Caro,
\emph{New results on the independence number},
Technical Report, Tel Aviv University, 1979.

\bibitem{CEOP1985}
L.~Caccetta, P.~Erd\H{o}s, E.~T.~Ordman, and N.~J.~Pullman,
\emph{The difference between the clique numbers of a graph},
Ars Combin.\ \textbf{19A} (1985), 97--106.

\bibitem{Chung1981}
F.~R.~K.~Chung,
\emph{On the decomposition of graphs},
SIAM J. Algebraic Discrete Methods \textbf{2} (1981), 1--12.

\bibitem{Chung1997}
F.~R.~K.~Chung,
\emph{Open problems of Paul Erd\H{o}s in graph theory},
J. Graph Theory \textbf{25} (1997), 3--36.

\bibitem{deBruijnErdos1948}
N.~G.~de Bruijn and P.~Erd\H{o}s,
\emph{On a combinatorial problem},
Indag. Math.\ \textbf{10} (1948), 421--423.

\bibitem{EGP1966}
P.~Erd\H{o}s, A.~W.~Goodman, and L.~P\'osa,
\emph{The representation of a graph by set intersections},
Canad. J. Math.\ \textbf{18} (1966), 106--112.

\bibitem{EFO1988}
P.~Erd\H{o}s, R.~J.~Faudree, and E.~T.~Ordman,
\emph{Clique partitions and clique coverings},
Discrete Math.\ \textbf{72} (1988), 93--101.

\bibitem{EOZ1987}
P.~Erd\H{o}s, E.~T.~Ordman, and Y.~Zalcstein,
\emph{Bounds on threshold dimension and disjoint threshold coverings},
SIAM J. Algebraic Discrete Methods \textbf{8} (1987), 151--154.

\bibitem{EOZ1993}
P.~Erd\H{o}s, E.~T.~Ordman, and Y.~Zalcstein,
\emph{Clique partitions of chordal graphs},
Combin. Probab. Comput.\ \textbf{2} (1993), 409--415.

\bibitem{ErdosProblems632}
\emph{Erd\H{o}s Problems on Graphs},
\emph{Difference of clique partition number to clique covering number},
Department of Mathematics, University of California San Diego,
\href{https://mathweb.ucsd.edu/~erdosproblems/erdos/newproblems/CpMinusCC.html}
{https://mathweb.ucsd.edu/\string~erdosproblems/erdos/newproblems/CpMinusCC.html}
(accessed July 30, 2026).

\bibitem{FRS1997}
R.~J.~Faudree, C.~C.~Rousseau, and R.~H.~Schelp,
\emph{Problems in graph theory from Memphis},
in R.~L.~Graham and J.~Ne\v{s}et\v{r}il (eds.),
\emph{The Mathematics of Paul Erd\H{o}s II},
Algorithms and Combinatorics, 14, Springer, Berlin, 1997, pp.~7--26.

\bibitem{GK1979}
E.~Gy\H{o}ri and A.~V.~Kostochka,
\emph{On a problem of G.~O.~H. Katona and T.~Tarj\'an},
Acta Math. Acad. Sci. Hungar.\ \textbf{34} (1979), 321--327.

\bibitem{Kahn1981}
J.~Kahn,
\emph{Proof of a conjecture of Katona and Tarj\'an},
Period. Math. Hungar.\ \textbf{12} (1981), 81--82.

\bibitem{Mantel1907}
W.~Mantel,
\emph{Problem 28},
Wiskundige Opgaven \textbf{10} (1907), 60--61.

\bibitem{Orlin1977}
J.~B.~Orlin,
\emph{Contentment in graph theory: covering graphs with cliques},
Indag. Math.\ \textbf{39} (1977), 406--424.

\bibitem{Schwartz2022}
S.~Schwartz,
\emph{An overview of graph covering and partitioning},
Discrete Math.\ \textbf{345} (2022), no.~8, Paper No.~112884, 17 pages.

\bibitem{Stinson2004}
D.~R.~Stinson,
\emph{Combinatorial Designs: Constructions and Analysis},
Springer, New York, 2004.

\bibitem{Vizing1964}
V.~G.~Vizing,
\emph{On an estimate of the chromatic class of a $p$-graph},
Diskret. Analiz \textbf{3} (1964), 25--30 (in Russian).

\bibitem{Wallis1982}
W.~D.~Wallis,
\emph{Asymptotic values of clique partition numbers},
Combinatorica \textbf{2} (1982), 99--101.

\bibitem{Wei1981}
V.~K.~Wei,
\emph{A lower bound on the stability number of a simple graph},
Bell Laboratories Technical Memorandum 81-11217-9, 1981.

\end{thebibliography}
\end{document}